\documentclass[11pt]{amsart}
\usepackage{amssymb,amsthm,amsmath,epsfig,latexsym}
\usepackage{calc,times,verbatim}
\begin{document}

\newcommand{\mmbox}[1]{\mbox{${#1}$}}
\newcommand{\proj}[1]{\mmbox{{\mathbb P}^{#1}}}
\newcommand{\Cr}{C^r(\Delta)}
\newcommand{\CR}{C^r(\hat\Delta)}
\newcommand{\affine}[1]{\mmbox{{\mathbb A}^{#1}}}
\newcommand{\Ann}[1]{\mmbox{{\rm Ann}({#1})}}
\newcommand{\caps}[3]{\mmbox{{#1}_{#2} \cap \ldots \cap {#1}_{#3}}}
\newcommand{\Proj}{{\mathbb P}}
\newcommand{\N}{{\mathbb N}}
\newcommand{\Z}{{\mathbb Z}}
\newcommand{\R}{{\mathbb R}}
\newcommand{\A}{{\mathcal{A}}}
\newcommand{\Tor}{\mathop{\rm Tor}\nolimits}
\newcommand{\Ext}{\mathop{\rm Ext}\nolimits}
\newcommand{\Hom}{\mathop{\rm Hom}\nolimits}
\newcommand{\im}{\mathop{\rm Im}\nolimits}
\newcommand{\rank}{\mathop{\rm rank}\nolimits}
\newcommand{\supp}{\mathop{\rm supp}\nolimits}
\newcommand{\arrow}[1]{\stackrel{#1}{\longrightarrow}}
\newcommand{\CB}{Cayley-Bacharach}
\newcommand{\coker}{\mathop{\rm coker}\nolimits}
\sloppy
\newtheorem{defn0}{Definition}[section]
\newtheorem{prop0}[defn0]{Proposition}
\newtheorem{quest0}[defn0]{Question}
\newtheorem{thm0}[defn0]{Theorem}
\newtheorem{lem0}[defn0]{Lemma}
\newtheorem{corollary0}[defn0]{Corollary}
\newtheorem{example0}[defn0]{Example}
\newtheorem{remark0}[defn0]{Remark}

\newenvironment{defn}{\begin{defn0}}{\end{defn0}}
\newenvironment{prop}{\begin{prop0}}{\end{prop0}}
\newenvironment{quest}{\begin{quest0}}{\end{quest0}}
\newenvironment{thm}{\begin{thm0}}{\end{thm0}}
\newenvironment{lem}{\begin{lem0}}{\end{lem0}}
\newenvironment{cor}{\begin{corollary0}}{\end{corollary0}}
\newenvironment{exm}{\begin{example0}\rm}{\end{example0}}
\newenvironment{rem}{\begin{remark0}\rm}{\end{remark0}}

\newcommand{\defref}[1]{Definition~\ref{#1}}
\newcommand{\propref}[1]{Proposition~\ref{#1}}
\newcommand{\thmref}[1]{Theorem~\ref{#1}}
\newcommand{\lemref}[1]{Lemma~\ref{#1}}
\newcommand{\corref}[1]{Corollary~\ref{#1}}
\newcommand{\exref}[1]{Example~\ref{#1}}
\newcommand{\secref}[1]{Section~\ref{#1}}
\newcommand{\remref}[1]{Remark~\ref{#1}}
\newcommand{\questref}[1]{Question~\ref{#1}}

\newcommand{\std}{Gr\"{o}bner}
\newcommand{\jq}{J_{Q}}


\title[Arrangements with quadratic log vector fields]{Projective duality of arrangements with quadratic logarithmic vector fields}
\author{Stefan O. Tohaneanu}

\subjclass[2010]{Primary 52C35; Secondary: 13D02} \keywords{hyperplane arrangements, logarithmic derivation, syzygies, Jacobian ideal. \\ \indent Department of Mathematics, University of Idaho, Moscow, Idaho 83844-1103, USA, Email: tohaneanu@uidaho.edu, Phone: 208-885-6234, Fax: 208-885-5843.}

\begin{abstract}
In these notes we study hyperplane arrangements having at least one logarithmic derivation of degree two that is not a combination of degree one logarithmic derivations. It is well-known that if a hyperplane arrangement has a linear logarithmic derivation not a constant multiple of the Euler derivation, then the arrangement decomposes as the direct product of smaller arrangements. The next natural step would be to study arrangements with non-trivial quadratic logarithmic derivations. On this regard, we present a computational lemma that leads to a full classification of hyperplane arrangements of rank 3 having such a quadratic logarithmic derivation. These results come as a consequence of looking at the variety of the points dual to the hyperplanes in such special arrangements. \end{abstract}
\maketitle

\section{Introduction}

Let $\mathcal A$ be a central essential hyperplane arrangement in $V$ a vector space of dimension $k$ over $\mathbb K$ a field of characteristic zero. Let $R=Sym(V^*)=\mathbb K[x_1,\ldots,x_k]$ and fix $\ell_i\in R,i=1,\ldots,n$ the linear forms defining the hyperplanes of $\mathcal A$. After a change of coordinates, assume that $\ell_i=x_i,i=1,\ldots,k$.

A {\em logarithmic derivation} (or {\em logarithmic vector field}) of $\mathcal A$ is an element $\theta\in Der(R)$, such that $\theta(\ell_i)\in\langle \ell_i\rangle$, for all $i=1,\ldots,n$. Picking the standard basis for $Der(R)$, i.e., $\partial_1:=\partial_{x_1},\ldots,\partial_k:=\partial_{x_k}$, if $\theta$ is written as $$\theta=\sum_{i=1}^kP_i\partial_i,$$ where $P_i\in R$ are homogeneous polynomials of the same degree, then $\deg(\theta)=\deg(P_i)$. The set of logarithmic derivations forms an $R-$module, and whenever this module is free one says that the hyperplane arrangement is {\em free}.

In general, every central hyperplane arrangement has {\em the Euler derivation}: $$\theta_E=x_1\partial_1+\cdots+x_k\partial_k.$$ There exists a one-to-one correspondence between logarithmic derivations not multiples of $\theta_E$ and the first syzygies on the {\em Jacobian ideal of $\mathcal A$}, which is the ideal of $R$ generated by the (first order) partial derivatives of the defining polynomial of $\mathcal A$.\footnote{For more details about this, and in general about the theory of hyperplane arrangements, the first place to look is the landmark book of Orlik and Terao, \cite{OrTe}.} Therefore, we are interested in hyperplane arrangements that have a minimal quadratic syzygy, on its Jacobian ideal. Throughout the notes we are going to use both terminologies: ``non-trivial logarithmic derivation'' or ``minimal quadratic syzygy''.

\cite{Zi} presents interesting constructions of hyperplane arrangements with linear or quadratic syzygies. These are summed up in Proposition 8.5: if $\mathcal A$ is an essential arrangement with $e(\mathcal A')\leq 2$, for all subarrangements $\mathcal A'\subseteq \mathcal A$, then the Jacobian ideal of $\mathcal A$ has only linear or quadratic syzygies, which are combinatorially constructed. Here $e(\mathcal A')=\min_{H\in\mathcal A'}(|\mathcal A'|-|\mathcal A'|_H|)$ is the {\em excess} of $\mathcal A'$. A question comes up immediately: is it true that a free hyperplane arrangement with exponents $1$'s and $2$'s is supersolvable? The answer would be yes, if one shows that a free arrangement with exponents $1$'s and $2$'s has quadratic Orlik-Terao algebra. Then, by using \cite[Theorems 5.1 and 5.11]{DeGaTo}, one obtains supersolvability.

In these notes we do not discuss the freeness of the hyperplane arrangements we study. We are more interested in the geometry of the configuration of points that are dual to the hyperplanes of an arrangement that has a quadratic syzygy on its Jacobian ideal.

At the beginning of the next section we briefly review \cite[Proposition 4.29(3)]{OrTe} that characterizes hyperplane arrangements with a linear syzygy, and we look at this result from the projective duality view mentioned already. Next we study hyperplane arrangements with a quadratic minimal syzygy. We also obtain that the dual points lie on an interesting variety, though its description is not even close to the nice combinatorial case of the linear syzygy. Nevertheless, using this description we are able to classify up to a change of coordinates all rank 3 hyperplane arrangements having a quadratic minimal syzygy on their Jacobian ideal (Theorem \ref{main2}). We end with two questions, one addressing a simpler and shorter proof of Theorem \ref{main2}, and the other asking if it is possible to obtain a similar classification but for higher rank arrangements with a quadratic minimal logarithmic vector field.

\section{Arrangements with low degree logarithmic derivations}

\subsection{Linear logarithmic derivations}

Dropping the freeness condition which is not necessary in our study, \cite[Proposition 4.29(3)]{OrTe} shows the following: If $\mathcal A$ is an arrangement with $e_1$ linearly independent degree 1 logarithmic derivations (including $\theta_E$), then $\mathcal A$ is a direct product of $e_1$ irreducible arrangements.

One can obtain the same result, with a different interpretation of $e_1$, by studying the points dual to the hyperplanes of $\mathcal A$ in the following manner. Keeping the notations from the beginning of Introduction, let us assume that $\mathcal A$ has a linear logarithmic derivation, not a constant multiple of $\theta_E$: $$\theta=L_1\partial_1+\cdots+L_k\partial_k,$$ where $L_j$ are some linear forms in $R$. Because $\theta(x_i)=a_ix_i, i=1,\ldots,k$ for some constants $a_i\in\mathbb K$, then $$L_i=a_ix_i,i=1,\ldots,k,$$ and not all $a_i$'s are equal to each-other (otherwise we would get a constant multiple of $\theta_E$).

For $i\geq k+1$, suppose $\ell_i=p_{1,i}x_1+\cdots+p_{k,i}x_k,p_{j,i}\in\mathbb K$. The logarithmic condition $\theta(\ell_i)=\lambda_i\ell_i,i\geq k+1,\lambda_i\in\mathbb K$ translates into $$\left[\begin{array}{ccc}
a_1& &0\\
 &\ddots&\\
 0& &a_k
 \end{array}\right]\cdot \left[\begin{array}{c}
 p_{1,i}\\
 \vdots\\
 p_{k,i}
 \end{array}\right]=\lambda_i\left[\begin{array}{c}
 p_{1,i}\\
 \vdots\\
 p_{k,i}
 \end{array}\right].$$ Therefore the points in $\mathbb P^{k-1}$ dual to the hyperplanes $\ell_i$ sit on the scheme with defining ideal $I$ generated by the $2\times 2$ minors of the matrix $$\left[\begin{array}{cccc}
a_1x_1&a_2x_2&\cdots&a_kx_k\\
x_1&x_2&\cdots&x_k
 \end{array}\right].$$

 Obviously $$I=\langle\{(a_i-a_j)x_ix_j:i\neq j\}\rangle,$$ and this is the edge (graph) ideal of a complete multipartite simple graph on vertices $1,\ldots,k$; two vertices $u$ and $v$ belong to the same partition iff $a_u=a_v$.

 For a simple graph $G$, a minimal vertex cover is a subset of vertices of $G$, minimal under inclusion, such that every edge of $G$ has at least one vertex in this subset. By \cite{Vi}, since $I$ is the edge ideal of a simple graph $G$ (complete multipartite), all the minimal primes of $I$ are generated by subsets of variables corresponding to the minimal vertex covers of $G$.\footnote{Greg Burnham, an REU student of Jessica Sidman, attributes this well known result to Rafael Villarreal, so we decided to use the same citation.}  Also, since $I$ is generated by square-free monomials, it must be a radical ideal, hence it is equal to the intersection of its minimal primes.

It is not difficult to show that if $G$ is a complete multipartite graph with partition $P_1,\ldots,P_s$, then the minimal vertex covers of $G$ are $V(G)-P_i,i=1,\ldots,s$. So $$I=I(G)=\cap_{i=1}^s\langle\{x_v:v\in V(G)-P_i\}\rangle.$$ The points dual to the hyperplanes of $\mathcal A$ are in the zero set (the variety) of $I$. If $[p_1,\ldots,p_k]\in V(I)$, then there is $1\leq j\leq s$ with $p_v=0$, for all $v\in V(G)-P_j$. Then the linear form dual to this point belongs to $\mathbb K[x_v, v\in P_j]$, so it defines a hyperplane in $\mathbb P^{|P_j|-1}$.

Since $\mathcal A$ has full rank, each component must contain at least one of these points, and therefore we can group the linear forms accordingly to the components their dual points belong to. So $\mathcal A=\mathcal A_1\times\cdots\times\mathcal A_s$ where $\mathcal A_i\subset \mathbb P^{|P_i|-1},i=1,\ldots,s$ and $P_1,\ldots,P_s$ is the partition of the complete multipartite graph we have seen above.

\subsection{Quadratic logarithmic derivations}

In this subsection we consider hyperplane arrangements with quadratic logarithmic derivations, not a linear combination of linear logarithmic derivations. In other words, the Jacobian ideal has a minimal quadratic syzygy.

Let $\mathcal A$ be as before, with $$\ell_i=x_i, i=1,\ldots,k$$ and $$\ell_j=p_{1,j}x_1+\cdots+p_{k,j}x_k, j\geq k+1.$$

Let $\theta = Q_1\partial_1+\cdots+Q_k\partial_k$ be a quadratic logarithmic derivation, $Q_i\in R:=\mathbb K[x_1,\ldots,x_k]$ quadratic homogeneous polynomials, assumed to have no common divisor.

For $i=1,\ldots,k$, since $\theta(x_i)=L_ix_i$ for linear form $$L_i=b_{1,i}x_1+\cdots+b_{k,i}x_k, b_{u,i}\in\mathbb K,$$ then $Q_i=L_ix_i, i=1,\ldots,k$.

Similarly to the linear syzygy case, we will analyse the dual points to each hyperplane in $\mathcal A$, and in fact the configuration of these points if $\mathcal A$ has a quadratic logarithmic derivation. The next result gives the first insights into this regard.

\begin{lem}\label{lemma_quadratic} Let $\mathcal A$ be a hyperplane arrangement with a quadratic logarithmic derivation. If $V(\ell_j)\in\mathcal A$, where $\ell_j=p_{1,j}x_1+\cdots+p_{k,j}x_k$ with $p_{u,j},p_{v,j}\neq 0$, then $$[p_{1,j},\ldots,p_{k,j}]\in V(I_{u,v}),$$ where $I_{u,v}$ is the ideal of $R$ generated by the following $k-1$ elements: $$x_u(b_{v,u}-b_{v,v})+x_v(b_{u,v}-b_{u,u}),$$ and $$x_ux_v(b_{w,u}-b_{w,v})+x_vx_w(b_{u,w}-b_{u,u})-x_ux_w(b_{v,w}-b_{v,v}), w\neq u,v.$$
\end{lem}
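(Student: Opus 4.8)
The plan is to impose the logarithmic condition $\theta(\ell_j) = \lambda_j \ell_j$ directly and extract the constraints on the coordinates $p_{1,j},\ldots,p_{k,j}$. Since $\theta = \sum_{i=1}^k L_i x_i \partial_i$ with $L_i = \sum_{t=1}^k b_{t,i} x_t$, we have $\theta(\ell_j) = \sum_{i=1}^k p_{i,j} L_i x_i$, which is a cubic form, while $\lambda_j \ell_j$ is linear — so in fact we must be comparing $\theta(\ell_j)$ against $\ell_j$ times a quadratic form. Concretely, the correct reading of ``logarithmic'' here is $\theta(\ell_j) \in \langle \ell_j\rangle$, so $\theta(\ell_j) = \ell_j \cdot Q_j$ for some quadratic form $Q_j$. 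First I would write out $\theta(\ell_j) = \sum_{i=1}^k p_{i,j}\, x_i \sum_{t=1}^k b_{t,i} x_t = \sum_{i,t} p_{i,j} b_{t,i}\, x_i x_t$, collect this as a quadratic form in the $x$'s, and likewise expand $\ell_j Q_j = (\sum_i p_{i,j} x_i)(\sum_{s\le t} c_{s,t} x_s x_t)$ with undetermined coefficients $c_{s,t}$ for $Q_j$.

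The key step is to eliminate the unknowns $c_{s,t}$ (equivalently $\lambda_j$ and its higher-degree analogue) and keep only relations purely among the $p_{i,j}$ and the $b$'s. I would do this by comparing coefficients of cubic monomials $x_a x_b x_c$ on both sides of $\theta(\ell_j) = \ell_j Q_j$. Wait — $\theta(\ell_j)$ as I wrote it is quadratic, not cubic; the resolution is that $Q_i = L_i x_i$ is quadratic so $\theta(\ell_j) = \sum p_{i,j} Q_i = \sum p_{i,j} L_i x_i$ is \emph{cubic}, and $\ell_j Q_j$ with $Q_j$ quadratic is also cubic, consistent. So I compare coefficients of each cubic monomial. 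The idea is that for a fixed pair $u,v$ with $p_{u,j}, p_{v,j} \neq 0$, one can solve for all the coefficients of $Q_j$ in terms of the $b$'s and ratios $p_{w,j}/p_{u,j}$ or $p_{w,j}/p_{v,j}$ by using ``enough'' of the cubic-monomial equations (those involving $x_u^2 x_\bullet$ and $x_v^2 x_\bullet$, say), and then the \emph{remaining} cubic-monomial equations become the stated relations — after clearing denominators, i.e. multiplying through by $p_{u,j}$ and/or $p_{v,j}$ and recognizing that the coordinate $[p_{1,j},\ldots,p_{k,j}]$ satisfies the homogeneous polynomial one gets by replacing $p_{t,j}$ with the variable $x_t$. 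Specifically, the coefficient of $x_u x_v x_u = x_u^2 x_v$ comparison should yield the first generator $x_u(b_{v,u}-b_{v,v}) + x_v(b_{u,v}-b_{u,u})$ (this is the $\binom{2}{2}$-minor-type relation from only the two ``active'' coordinates), and the coefficient of $x_u x_v x_w$ for $w \neq u,v$, after substituting back the solved values of the $Q_j$-coefficients, should yield the degree-two generator $x_u x_v(b_{w,u}-b_{w,v}) + x_v x_w(b_{u,w}-b_{u,u}) - x_u x_w(b_{v,w}-b_{v,v})$.

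So the steps in order are: (1) expand $\theta(\ell_j) = \sum_{i=1}^k p_{i,j} b_{t,i} x_i x_t$ — actually $\sum_{i,t} p_{i,j} b_{t,i} x_i x_t$, a cubic once we remember the extra $x_i$ from $Q_i = L_i x_i$, and write $\theta(\ell_j) = \sum_{i} p_{i,j} x_i^2 L_i$; (2) set this equal to $\ell_j \cdot Q_j$ with $Q_j = \sum_{s\le t} c_{s,t} x_s x_t$; (3) compare coefficients of cubic monomials, first isolating those supported on $\{u,v\}$ to recover the linear (degree-one) generator and to solve for $c_{u,u}, c_{u,v}, c_{v,v}$; (4) for each $w \neq u,v$, use the $x_u^2 x_w$ and $x_v^2 x_w$ comparisons to solve for $c_{u,w}$ and $c_{v,w}$; (5) feed these into the $x_u x_v x_w$ comparison to obtain, after clearing the factors $p_{u,j}$ and $p_{v,j}$ (legitimate since both are nonzero) and homogenizing, the stated quadratic generator. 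The main obstacle I anticipate is bookkeeping: keeping the indices straight in the cubic-monomial coefficient comparisons, being careful that $L_i$ contributes a coefficient $b_{t,i}$ to $x_i x_i x_t = x_i^2 x_t$, and ensuring the elimination of the $c_{s,t}$ is done consistently so that the final relations are exactly the $k-1$ stated generators and not some equivalent-but-uglier set. A secondary subtlety is verifying that the $p_{u,j}, p_{v,j}\neq 0$ hypothesis is genuinely what is needed to clear denominators, and that no spurious relation is lost when $p_{w,j}=0$ for some $w$ (in that case the corresponding quadratic generator should still vanish at the point, which one checks directly).
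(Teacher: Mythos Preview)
Your overall strategy---write the logarithmic condition $\theta(\ell_j)\in\langle\ell_j\rangle$, compare coefficients, solve for the auxiliary form, and read off the constraints on $[p_{1,j},\ldots,p_{k,j}]$---is exactly what the paper does. But you have a degree miscount that, if followed literally, would derail the computation. Since $Q_i=L_ix_i$ is quadratic and $\partial_i(\ell_j)=p_{i,j}$ is a constant, one has
\[
\theta(\ell_j)=\sum_{i=1}^k p_{i,j}\,Q_i=\sum_{i=1}^k p_{i,j}\,L_ix_i,
\]
which is \emph{quadratic}, not cubic; there is no extra $x_i$, and your expression $\sum_i p_{i,j}x_i^2L_i$ is wrong. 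Consequently the correct factorization is $\theta(\ell_j)=\ell_j\cdot(A_{1,j}x_1+\cdots+A_{k,j}x_k)$ with a \emph{linear} cofactor, not a quadratic $Q_j$. This matters: with a quadratic $Q_j$ you would be introducing $\binom{k+1}{2}$ unknowns and matching cubic monomials, whereas the actual problem has only $k$ unknowns $A_{1,j},\ldots,A_{k,j}$ and a quadratic-coefficient comparison.

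Once you fix this, the paper's argument is precisely your steps (3)--(5), only one degree lower. Comparing coefficients of $x_u^2$ and $x_v^2$ (using $p_{u,j},p_{v,j}\neq 0$) gives $A_{u,j}=b_{u,u}$ and $A_{v,j}=b_{v,v}$. The $x_ux_v$ coefficient then yields the linear generator $x_u(b_{v,u}-b_{v,v})+x_v(b_{u,v}-b_{u,u})$ directly. For each $w\neq u,v$, the $x_ux_w$ and $x_vx_w$ coefficients give two expressions for $A_{w,j}$ (one involving $p_{w,j}/p_{u,j}$, the other $p_{w,j}/p_{v,j}$); equating them and clearing the nonzero denominators $p_{u,j},p_{v,j}$ produces the quadratic generator. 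So your plan is right in spirit but needs the degree corrected before the bookkeeping will close.
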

\begin{proof} Suppose $p_{1,j}\neq 0$ and $p_{2,j}\neq 0$.

We have that $\theta(\ell_j)=\ell_j(A_{1,j}x_1+\cdots+A_{k,j}x_k), A_{i,j}\in\mathbb K$, leading to $$L_1x_1p_{1,j}+ \cdots+L_kx_kp_{k,j}=(p_{1,j}x_1+\cdots+p_{k,j}x_k)(A_{1,j}x_1+\cdots+A_{k,j}x_k).$$

Identifying coefficients one obtains the following equations relevant to our calculations:

\begin{eqnarray}
p_{1,j}(b_{1,1}-A_{1,j})&=&0\nonumber\\
p_{2,j}(b_{2,2}-A_{2,j})&=&0\nonumber
\end{eqnarray} and

\begin{eqnarray}
p_{1,j}(b_{2,1}-A_{2,j})+p_{2,j}(b_{1,2}-A_{1,j})&=&0\nonumber\\
p_{1,j}(b_{u,1}-A_{u,j})+p_{u,j}(b_{1,u}-A_{1,j})&=&0, u\geq 3\nonumber\\
p_{2,j}(b_{u,2}-A_{u,j})+p_{u,j}(b_{2,u}-A_{2,j})&=&0, u\geq 3.\nonumber
\end{eqnarray}

Since $p_{1,j},p_{2,j}\neq 0$, we have $A_{1,j}=b_{1,1}$ and $A_{2,j}=b_{2,2}$, from the first two equations, and from the second group of equations we have $$p_{1,j}(b_{2,1}-b_{2,2})+p_{2,j}(b_{1,2}-b_{1,1})=0$$ and for all $u\geq 3$
\begin{eqnarray}
A_{u,j}&=& b_{u,1}+\frac{p_{u,j}}{p_{1,j}}(b_{1,u}-b_{1,1})\nonumber\\
    &=& b_{u,2}+\frac{p_{u,j}}{p_{2,j}}(b_{2,u}-b_{2,2}).\nonumber
\end{eqnarray}

From these one obtains that the dual point to the line $\ell_j=0$, belongs to the ideal of $R$ generated by

$$x_1(b_{2,1}-b_{2,2})+x_2(b_{1,2}-b_{1,1})$$ and $$\{x_1x_2(b_{u,1}-b_{u,2})+x_2x_u(b_{1,u}-b_{1,1})-x_1x_u (b_{2,u}-b_{2,2})\}_{u\geq 3}.$$
\end{proof}

If $j\geq k+1$, $\ell_j$ has at least two non-zero coefficients. If $1\leq i\leq k$, $\ell_i=x_i$ and the dual point to this hyperplane belongs to $V(I_{u,v})$ for any $u,v\neq i$. Summing up we obtain the following:

\begin{cor}\label{char} If a hyperplane arrangement $\mathcal A$ has a quadratic logarithmic derivation then the points dual to the hyperplanes of $\mathcal A$ lie on the variety $\displaystyle\bigcup_{1\leq u<v\leq k}V(I_{u,v})$, where each ideal $I_{u,v}$ is defined as in Lemma \ref{lemma_quadratic}.
\end{cor}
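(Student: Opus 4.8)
The plan is to derive Corollary \ref{char} as a direct bookkeeping consequence of Lemma \ref{lemma_quadratic}, the only subtlety being to handle uniformly the coordinate hyperplanes $\ell_1,\ldots,\ell_k$ and the remaining hyperplanes $\ell_{k+1},\ldots,\ell_n$. First I would recall that every hyperplane $V(\ell)$ of $\mathcal A$ corresponds to a point $[\ell]$ in $\mathbb P^{k-1}$ (the point dual to the coefficient vector of $\ell$), and that Lemma \ref{lemma_quadratic} asserts: whenever $\ell_j$ has two nonzero coefficients in positions $u$ and $v$, then the dual point lies in $V(I_{u,v})$. (The lemma was stated for positions $1,2$ but, as the author already notes in the proof by writing the symmetric-looking formulas, the same argument applies verbatim after permuting indices, so $I_{u,v}$ is well defined for every pair $u<v$.)

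Next I would split the hyperplanes of $\mathcal A$ into two classes. For $j\ge k+1$, the form $\ell_j=p_{1,j}x_1+\cdots+p_{k,j}x_k$ cannot be supported on a single coordinate: if it were, it would equal a scalar multiple of some $x_i$, hence coincide with the hyperplane $\ell_i=x_i$ already in $\mathcal A$, contradicting that the $\ell_i$ are pairwise non-proportional (distinct hyperplanes). Thus $\ell_j$ has at least two nonzero coefficients, say in positions $u<v$, and Lemma \ref{lemma_quadratic} puts its dual point in $V(I_{u,v})$, which is contained in $\bigcup_{1\le u<v\le k}V(I_{u,v})$. For $1\le i\le k$, the dual point of $\ell_i=x_i$ is the standard basis point $e_i=[0,\ldots,1,\ldots,0]$; I would check directly that $e_i$ satisfies every generator of $I_{u,v}$ for any pair $u,v$ with $u,v\ne i$. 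Indeed each listed generator of $I_{u,v}$ is a form in which every monomial is divisible by $x_u$ or by $x_v$ (the linear generator is $x_u(\cdots)+x_v(\cdots)$, and each cubic-free quadratic generator is $x_ux_v(\cdots)+x_vx_w(\cdots)-x_ux_w(\cdots)$, so every term contains $x_u$ or $x_v$); since $i\ne u$ and $i\ne v$, both $x_u$ and $x_v$ vanish at $e_i$, so $e_i\in V(I_{u,v})\subseteq\bigcup_{1\le u<v\le k}V(I_{u,v})$ as soon as $k\ge 3$ (and the statement is vacuous or trivial for $k\le 2$).

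Combining the two cases, every hyperplane of $\mathcal A$ has its dual point on $\bigcup_{1\le u<v\le k}V(I_{u,v})$, which is exactly the assertion of the corollary. There is essentially no obstacle here: the only point requiring a line of justification is the index-symmetry remark that $I_{u,v}$ is obtained from the lemma's $I_{1,2}$ by relabeling coordinates, together with the trivial verification that the coordinate points $e_i$ lie in the relevant $V(I_{u,v})$. I would state these two observations briefly and then conclude.
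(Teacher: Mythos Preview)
Your proposal is correct and follows essentially the same approach as the paper: the paper also splits into the two cases $j\ge k+1$ (where $\ell_j$ necessarily has at least two nonzero coefficients, so Lemma~\ref{lemma_quadratic} applies) and $1\le i\le k$ (where the dual point $e_i$ lies in $V(I_{u,v})$ for any $u,v\ne i$). Your additional remarks on index symmetry and on the explicit vanishing of the generators at $e_i$ simply make explicit what the paper leaves implicit.
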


\subsection{The case of line arrangements in $\mathbb P^2$} In this subsection we classify the line arrangements in $\mathbb P^2$ having a minimal quadratic syzygy on its Jacobian ideal. To differentiate from the previous Subsection 2.1, we assume further that this Jacobian ideal does not have a linear syzygy.

In what follows $\mathcal A$ has defining linear forms $\ell_1=x,\ell_2=y,\ell_3=z,$ and $\ell_i=\alpha_ix+\beta_iy+\gamma_iz,i\geq 4$.

With the previous notations we have
\begin{eqnarray}
L_1&=&b_{1,1}x+b_{2,1}y+b_{3,1}z\nonumber\\
L_2&=&b_{1,2}x+b_{2,2}y+b_{3,2}z\nonumber\\
L_3&=&b_{1,3}x+b_{2,3}y+b_{3,3}z\nonumber
\end{eqnarray} and $$Q_1=xL_1,Q_2=yL_2,Q_3=zL_3.$$

From Corollary \ref{char}, we have the points dual to the lines, meaning $[\alpha_i,\beta_i,\gamma_i]$,  sitting on $$V(I_{xy})\cup V(I_{xz}) \cup V(I_{yz}),$$ where
\begin{tiny}
\begin{eqnarray}
I_{xy}&=&\langle x(b_{2,1}-b_{2,2})+y(b_{1,2}-b_{1,1}), xy(b_{3,1}-b_{3,2})+yz(b_{1,3}-b_{1,1})-xz(b_{2,3}- b_{2,2})\rangle\nonumber\\
I_{xz}&=&\langle x(b_{3,1}-b_{3,3})+z(b_{1,3}-b_{1,1}), xz(b_{2,1}-b_{2,3})+yz(b_{1,2}-b_{1,1})-xy(b_{3,2}- b_{3,3})\rangle\nonumber\\
I_{yz}&=&\langle y(b_{3,2}-b_{3,3})+z(b_{2,3}-b_{2,2}), yz(b_{1,2}-b_{1,3})+xz(b_{2,1}-b_{2,2})-xy(b_{3,1}- b_{3,3})\rangle.\nonumber
\end{eqnarray}
\end{tiny}

Denote \begin{eqnarray}
a_1&:=& b_{2,1}-b_{2,2}\nonumber\\
b_1&:=& b_{1,2}-b_{1,1}\nonumber\\
a_2&:=& b_{3,1}-b_{3,3}\nonumber\\
c_2&:=& b_{1,3}-b_{1,1}\nonumber\\
b_3&:=& b_{3,2}-b_{3,3}\nonumber\\
c_3&:=& b_{2,3}-b_{2,2}.\nonumber
\end{eqnarray} Then our ideals of interest become:

\begin{eqnarray}
I_{xy}&=&\langle a_1x+b_1y, y(a_2x+c_2z)-x(b_3y+c_3z)\rangle\nonumber\\
I_{xz}&=&\langle a_2x+c_2z, z(a_1x+b_1y)-x(b_3y+c_3z)\rangle\nonumber\\
I_{yz}&=&\langle b_3y+c_3z, z(a_1x+b_1y)-y(a_2x+c_2z)\rangle.\nonumber
\end{eqnarray}

\begin{lem} \label{special_case} In the assumptions of this subsection, none of the ideals $I_{xy},I_{xz},I_{yz}$ is the zero ideal.
\end{lem}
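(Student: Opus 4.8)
The plan is to argue by contradiction and to use the symmetry of the set-up under permutations of $\{x,y,z\}$, which permutes $I_{xy},I_{xz},I_{yz}$ and leaves the standing hypotheses on $\mathcal A$ untouched; thus it is enough to show $I_{xy}\neq 0$. First I would translate $I_{xy}=0$ into linear conditions on the constants $b_{u,v}$. Its first generator $a_1x+b_1y$ vanishes precisely when $a_1=b_1=0$, while its second generator equals $(a_2-b_3)xy+c_2yz-c_3xz$, which vanishes precisely when $a_2=b_3$ and $c_2=c_3=0$. Reading these off through the definitions of the $a_i,b_i,c_i$ yields $b_{1,1}=b_{1,2}=b_{1,3}$, $b_{2,1}=b_{2,2}=b_{2,3}$, and $b_{3,1}=b_{3,2}$, which is exactly the statement that $L_1=L_2$ and $L_3=L_1+\mu z$ for $\mu:=b_{3,3}-b_{3,1}\in\mathbb K$.

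Substituting this into $\theta=xL_1\partial_x+yL_2\partial_y+zL_3\partial_z$ I would get the decomposition
$$\theta=L_1\,\theta_E+\mu\,z^2\partial_z.$$
Since $\theta$ is logarithmic by hypothesis and $L_1\theta_E$ is logarithmic (being an $R$-multiple of the Euler derivation), so is $\mu z^2\partial_z$. Now I would split into two cases. If $\mu=0$, then $\theta=L_1\theta_E$, and since $\theta$ is a genuine quadratic derivation we have $L_1\neq 0$, so the nonconstant form $L_1$ divides every $Q_i$, contradicting the assumption that $Q_1,Q_2,Q_3$ have no common divisor. If $\mu\neq 0$, then $z^2\partial_z$ itself is logarithmic; evaluating it on a line $\ell_j=\alpha_jx+\beta_jy+\gamma_jz$ with $j\geq 4$ gives $\gamma_j z^2\in\langle\ell_j\rangle$, and $\gamma_j\neq 0$ would force $\ell_j$ to be proportional to $z=\ell_3$, impossible since the hyperplanes of $\mathcal A$ are distinct. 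Hence $\gamma_j=0$ for all $j\geq 4$, and then $z\partial_z$ is immediately seen to be a degree-one logarithmic derivation of $\mathcal A$ that is not a scalar multiple of $\theta_E$, contradicting the standing assumption of this subsection that $\mathcal A$ has no linear syzygy. Either way we reach a contradiction, so $I_{xy}\neq 0$, and the same computation with the variables relabeled gives $I_{xz}\neq 0$ and $I_{yz}\neq 0$.

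I expect the main (mild) obstacle to be purely bookkeeping: correctly reading off from $I_{xy}=0$ that $L_1=L_2$ and $L_3-L_1\in\langle z\rangle$, and then double-checking that the analogous computations really do give $L_1=L_3$ with $L_2-L_1\in\langle y\rangle$ for $I_{xz}$, and $L_2=L_3$ with $L_1-L_2\in\langle x\rangle$ for $I_{yz}$, so that the ``by symmetry'' step is legitimate. Everything after the decomposition $\theta=L_1\theta_E+\mu z^2\partial_z$ is essentially forced. As an alternative way to close the case $\mu\neq 0$, one can observe that $\theta=L_1\theta_E+\mu z\,(z\partial_z)$ exhibits $\theta$ as an $R$-linear combination of the two linear logarithmic derivations $\theta_E$ and $z\partial_z$, which already contradicts the minimality of the quadratic syzygy; I would add this as a parenthetical remark.
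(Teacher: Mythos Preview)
Your proof is correct and follows the same overall contradiction strategy as the paper: assume $I_{xy}=0$, read off $a_1=b_1=c_2=c_3=0$ and $a_2=b_3$, deduce $L_1=L_2$ and $L_3=L_1+\mu z$, then split on whether $\mu=0$. Where you diverge is in how the case $\mu\neq 0$ is handled. The paper keeps $\theta$ intact, writes out $\theta(\ell)=\ell\cdot(Ax+By+Cz)$ for a general $\ell=\alpha x+\beta y+\gamma z$, matches six coefficients, and then runs through the subcases $\alpha\beta\gamma\neq 0$, $\alpha=0$, $\gamma=0$ to force either a contradiction with $s\neq t$ or the pencil shape $xyz\prod(\alpha_jx+\beta_jy)$. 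You instead peel off the Euler part via the identity $\theta=L_1\theta_E+\mu z^2\partial_z$, which immediately isolates $z^2\partial_z$ as a logarithmic derivation and gives $\gamma_j=0$ in one line; the existence of the linear logarithmic derivation $z\partial_z$ then finishes. This is a genuinely cleaner route: it replaces a six-equation coefficient chase by a one-line module-theoretic decomposition, and it avoids the paper's implicit normalization $L_1=x+y+sz$. Your symmetry reduction to the single case $I_{xy}$ is legitimate, and your bookkeeping on the $b_{u,v}$ is accurate. The parenthetical alternative you mention (that $\theta=L_1\theta_E+\mu z\,(z\partial_z)$ already contradicts minimality) is fine but, as you implicitly note, it still requires first knowing that $z\partial_z$ is logarithmic, i.e.\ that all $\gamma_j=0$; so it is a rephrasing of the same contradiction rather than an independent shortcut.
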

\begin{proof} If one of these ideals is the zero ideal, say $I_{xy}$, then $a_1=b_1=c_2=c_3=0$ and $a_2=b_3$. This leads to $$L_1=L_2=x+y+sz, L_3=x+y+tz,$$ with $t\neq s$ (otherwise obtaining a linear syzygy on the Jacobian ideal). Then the quadratic logarithmic derivation becomes $$\theta=x(x+y+sz)\partial_x+y(x+y+sz)\partial_y+z(x+y+tz)\partial_z.$$

Let $\ell=\alpha x+\beta y+\gamma z$ be a linear form defining a line in $\mathcal A$, but different than $\ell_1,\ell_2$ or $\ell_3$.

$\theta(\ell)=\ell(Ax+By+Cz),$ for some $A,B,C\in\mathbb K$ gives
\begin{eqnarray}
\alpha&=&\alpha A\nonumber\\
\beta&=&\beta B\nonumber\\
t\gamma&=&\gamma C\nonumber\\
\alpha+\beta&=&\alpha B+\beta A\nonumber\\
s\alpha+\gamma&=&\alpha C+\gamma A\nonumber\\
s\beta+\gamma&=&\beta C+\gamma B.\nonumber
\end{eqnarray}

If $\alpha,\beta,\gamma\neq 0$, then $A=B=1, C=t$. Fifth equation gives also $C=s$, which contradicts with $s\neq t$.

If $\alpha=0$, then $\beta, \gamma\neq 0$, otherwise we'd get $\ell_2$ or $\ell_3$. Then $B=1$ and $C=t$. The sixth equation also gives $C=s$, contradiction with $s\neq t$.

If $\gamma=0$, then $\alpha,\beta\neq 0$, giving $A=B=1, C=s$. {\em A priori} this could happen only if the defining polynomial of $\mathcal A$ is $xyz\prod(\alpha_ix+\beta_iy)$. But this is a contradiction with the setup of this subsection: our arrangements do not have a linear syzygy.
\end{proof}

\begin{thm}\label{main2} Let $\mathcal A$ be a line arrangement in $\mathbb P^2$, having a minimal quadratic syzygy on its Jacobian ideal, but not a linear syzygy. Then, up to a change of coordinates, $\mathcal A$ is one of the following three types of arrangements with defining polynomials (see also their affine pictures below):
\begin{enumerate}
  \item $F=xyz(x+y)\prod_j(y+t_jz), t_j\neq 0$.
  \item $F=xyz(x+y+z)\prod_j(y+t_jz), t_j\neq 0$.
  \item $F=xyz(x+y+z)(x+z)(y+z)$.
\end{enumerate}
\end{thm}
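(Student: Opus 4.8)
The plan is to exploit Lemma \ref{special_case} together with a case analysis on how many of the three ideals $I_{xy}, I_{xz}, I_{yz}$ vanish at the dual point of a ``generic'' extra line $\ell_i = \alpha_i x + \beta_i y + \gamma_i z$, $i \geq 4$. By Corollary \ref{char}, every dual point lies on $V(I_{xy}) \cup V(I_{xz}) \cup V(I_{yz})$; by Lemma \ref{special_case}, under the standing hypothesis none of these ideals is zero, so each defines a genuine curve (in fact the first generator of each is linear, so each $V(I_{uv})$ is contained in a line union a conic, and is at most one-dimensional). First I would normalize the situation: after rescaling $\theta$ and subtracting multiples of the Euler derivation and of any linear logarithmic derivations, I would argue that the coefficient system $a_1, b_1, a_2, c_2, b_3, c_3$ can be brought into one of a short list of normal forms. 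The key reduction is that if $\ell_i = \alpha_i x + \beta_i y + \gamma_i z$ with all coordinates nonzero lies on $V(I_{xy})$, then from $a_1 x + b_1 y$ vanishing we get $a_1 \alpha_i + b_1 \beta_i = 0$, which (since $I_{xy} \neq 0$) forces $\alpha_i / \beta_i$ to take a fixed value; similarly the second generator of $I_{xy}$ then pins down $\gamma_i$. So \emph{each} of the three curves $V(I_{uv})$ contains at most finitely many points with all coordinates nonzero — in fact at most one such point off the coordinate triangle, unless the curve degenerates.

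Next I would organize the proof around the number of coordinate-hyperplane-type lines versus ``honest'' lines. The three normal forms in the theorem are distinguished by how the extra lines distribute: in type (1) all extra lines beyond $x+y$ pass through the point $[1:0:0]$ dual to... (here one reads off that $y + t_j z$ are concurrent at $[1:0:0]$), giving a near-pencil; type (2) replaces the line $x+y$ by $x+y+z$, shifting the pencil center; type (3) is the rigid sporadic case $\mathcal{A}(A_3)$-like configuration with exactly six lines. Concretely: I would first dispose of the subcase where, for every $i \geq 4$, the dual point $[\alpha_i : \beta_i : \gamma_i]$ has a zero coordinate. Then all extra lines lie in $\langle x, y\rangle$, $\langle x, z\rangle$, or $\langle y, z\rangle$, i.e. pass through $[0:0:1]$, $[0:1:0]$, or $[1:0:0]$; a counting argument using that we have no linear syzygy (which would force a product decomposition, cf. the reasoning in Subsection 2.1) shows that all such lines must pass through a single one of these three vertices, and matching this against the constraints on $L_1, L_2, L_3$ imposed by $\theta(\ell_i) = \ell_i(\cdots)$ yields type (1) (or type (2) after the coordinate change that turns $x+y$ into $x+y+z$). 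Then I would handle the remaining subcase, where some extra line has all coordinates nonzero: by the finiteness observation above, at most one extra line can sit on each of the three curves in general position, bounding the number of such lines by $3$; pushing the equations $\alpha_i = \alpha_i A_i$, etc. (the six identities displayed in the proof of Lemma \ref{special_case}, now with general $L_1, L_2, L_3$) through, one finds that the only consistent configuration with no linear syzygy is the six-line arrangement of type (3).

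The main obstacle I expect is the bookkeeping in the mixed subcase: one must simultaneously track (a) the linear-algebra constraints on the $b_{u,v}$ coming from the requirement that $\theta$ be a \emph{minimal} quadratic syzygy (so $Q_1, Q_2, Q_3$ have no common factor and $\theta$ is not a combination of linear derivations — the latter is exactly the ``no linear syzygy'' hypothesis and must be used repeatedly to kill degenerate branches), and (b) the geometric constraint that the finitely many allowed dual points, together with $[1:0:0], [0:1:0], [0:0:1]$, actually form an arrangement admitting the prescribed $\theta$. Verifying that the three listed families are genuinely closed under the logarithmic condition (the ``if'' direction) is a direct check: for each $F$ one writes down an explicit quadratic $\theta$ and confirms $\theta(\ell) \in \langle \ell \rangle$ for every factor $\ell$ of $F$, and confirms there is no linear syzygy by inspecting the $2 \times 2$ minors relevant to Subsection 2.1. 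The delicate direction is the exhaustiveness, and the cleanest route is to let the normal-form reduction of $(a_1, b_1, a_2, c_2, b_3, c_3)$ do as much work as possible before invoking Corollary \ref{char}, so that only a bounded number of concrete coefficient patterns survive to be checked by hand — Question-style, this is precisely where a slicker argument is being solicited at the end of the paper.
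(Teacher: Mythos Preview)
Your overall strategy --- invoke Corollary~\ref{char} and Lemma~\ref{special_case}, then run a case analysis on where the dual points fall among $V(I_{xy}), V(I_{xz}), V(I_{yz})$ --- is the paper's approach. But the specific dichotomy you propose is miscalibrated in ways that would block the argument, not merely complicate the bookkeeping.

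In your first subcase (every extra dual point has a vanishing coordinate) the assertion that ``all such lines must pass through a single one of these three vertices'' is false: type~(1) itself is a counterexample, since $x+y$ passes through $[0:0:1]$ while each $y+t_jz$ passes through $[1:0:0]$. The no-linear-syzygy hypothesis only rules out a product decomposition; it does not force concurrence at one vertex. The parenthetical about obtaining type~(2) from type~(1) via a coordinate change sending $x+y$ to $x+y+z$ is also wrong: such a change destroys the normalization $\ell_1=x,\ell_2=y,\ell_3=z$, and in any event types~(1) and~(2) have different intersection lattices, so they are not projectively equivalent. In your second subcase (some extra line has all coordinates nonzero) the conclusion that only type~(3) survives is likewise wrong: type~(2) contains the line $x+y+z$, so it lives here. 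Finally, your ``finiteness observation'' that each $V(I_{uv})$ carries at most one point off the coordinate triangle fails exactly when $\mathrm{codim}(I_{uv})=1$ --- for instance when $a_1=b_1=0$, so that $I_{xy}$ is principal and $V(I_{xy})$ is an entire conic. You flag this with ``unless the curve degenerates'' but offer no treatment, and this degenerate branch is where most of the arrangements actually sit.

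The paper's case split is different and is what makes the argument go through: it first asks whether $\geq 2$ dual points have both of the first two coordinates nonzero (which, together with the point $[0,0,1]$, forces $\mathrm{codim}(I_{xy})=1$ by B\'ezout), and then branches on whether the linear generator $a_1x+b_1y$ of $I_{xy}$ is nonzero. This cleanly separates the situation ``$V(I_{xy})$ is a line'' (CASE~1.1) from ``$V(I_{xy})$ is a conic'' (CASE~1.2), after which one branches on $a_2,b_3,c_2,c_3$. Your zero-coordinate/nonzero-coordinate dichotomy cuts across this structure and loses precisely the codimension control that drives the classification.
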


\begin{center}
\epsfig{file=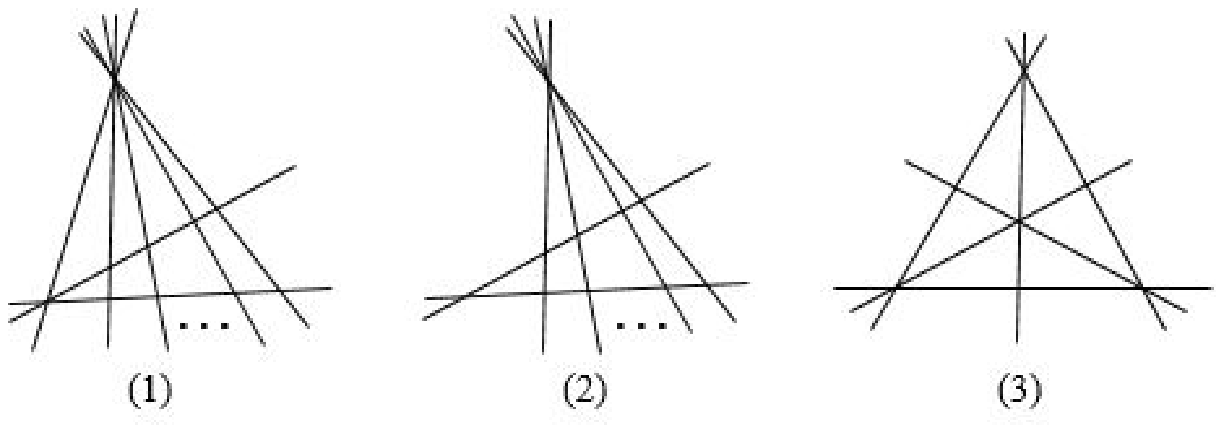,height=1.75in,width=4.5in}
\end{center}

\begin{proof}

A couple of observations are in place:

$\bullet$ The point $[0,0,1]$ (dual to $\ell_3$) is in $V(I_{xy})$, the point $[0,1,0]$ (dual to $\ell_2$) is in $V(I_{xz})$, and the point $[1,0,0]$ (dual to $\ell_1$) is in $V(I_{yz})$.

$\bullet$ If the zero locus of any of the three ideals contains 3 or more points, then the corresponding ideal will have codimension 1, and hence will be generated by the linear generator (if the coefficients of this are not zero). This comes from the fact that if the codimension of such an ideal is 2, then the zero locus will be a finite set of points, and since the ideal is generated by a linear form and a quadric, by B\'{e}zout's theorem we can have at most 2 points in this zero locus (exactly 2 if the line and the conic intersect transversally). Lemma \ref{special_case} assures that this codimension is $> 0$.

\medskip

The proof goes through several cases enforced by these two bullets.

\vskip .2in

\noindent {\bf CASE 1:} Suppose $n_1\geq 2$ dual points have the first two coordinates different than zero. Then, from Lemma \ref{lemma_quadratic} these points belong to $V(I_{xy})$. From the two bullets above, $codim(I_{xy})=1$.

\medskip

\noindent \underline{CASE 1.1}: Suppose $a_1\neq 0$.

If $b_1=0$ then these $n_1$ points will be on $V(a_1x)$, hence their first coordinate will be zero. Contradiction. So $b_1\neq 0$, and these $n_1$ points have homogeneous coordinates $[b_1,-a_1,t]$, for some $t\in\mathbb K$.

Also, $codim(I_{xy})=1$ implies $a_1x+b_1y$ divides $y(a_2x+c_2z)-x(b_3y+c_3z)$, which is true if and only if $a_2=b_3,c_3=-a_1w,c_2=b_1w$, for some $w\in \mathbb K$.

We can have $t=0$, leading to the point $[b_1,-a_1,0]$, and the corresponding dual linear form $b_1x-a_1y$; or $t\neq 0$ and in this last situation $[b_1,-a_1,t]\in V(I_{xz})\cap V(I_{yz})$ (as all the coordinates of this point are different than zero, and from Lemma \ref{lemma_quadratic}). So $a_2b_1+c_2t=0$, leading to $wt=-a_2$. Note that also $b_3(-a_1)+c_3t=0$, but by using $c_3=-a_1w$ and $b_3=a_2$, after simplifying by $a_1\neq 0$ one obtains the same $wt=-a_2$.

\medskip

CASE 1.1.1: $a_2=0$.

If $t\neq 0$, then $w=0$ and so $a_2=b_3=c_2=c_3=0$. This leads to $$I_{xy}=\langle a_1x+b_1y\rangle, I_{xz}=I_{yz}=\langle z(a_1x+b_1y)\rangle.$$

Looking at $\ell_i=\alpha_ix+\beta_iy+\gamma_iz,i\geq 4$, if $\gamma_u=0$, for some $u$, then none of the corresponding $\alpha_u$ or $\beta_u$ can be zero, as we would obtain $\ell_1$ or $\ell_2$. So the dual point $[\alpha_u,\beta_u,0]$ has the first two coordinates $\neq 0$, the setup of CASE 1. Hence $a_1\alpha_u+b_1\beta_u=0$, equivalently obtaining the linear form $b_1x-a_1y$; same situation when $t=0$.

If $\gamma_u\neq 0$, then we obtain again that the first two coordinates of the dual points of $\ell_i, i\geq 4$ must satisfy the equation $a_1x+b_1y=0$.

This leads to the only possibility of $\mathcal A$ having the defining polynomial: $F=xyz(b_1x-a_1y)\prod(b_1x-a_1y+\gamma_jz), \gamma_j\neq 0$. After an appropriate change of coordinates one gets $$F=x(x+y)yz\prod(y+\gamma_jz),\gamma_j\neq 0,$$ which is a type (1) arrangement.

\medskip

CASE 1.1.2: $a_2\neq 0$.

Then $w\neq 0$ and $t=-a_2/w\neq 0$. We are still in the situation $b_1\neq 0$, see the beginning of CASE 1.1. So $c_2=b_1w\neq 0$.

\medskip

{\em First possibility.} Suppose there exists another (dual) point, different than $[0,1,0]$ and $[b_1,-a_1,-a_2/w]$, and with the first and last coordinate different than zero. Then, by Lemma \ref{lemma_quadratic} this point belongs to $V(I_{xz})$, and hence it has homogeneous coordinates $[c_2,t',-a_2]=[b_1,t'/w,-a_2/w]$, for some $t'\in\mathbb K$.

If $t'\neq 0$, then this extra point is in $V(I_{xy})$ as well (by Lemma \ref{lemma_quadratic}). Therefore, $t'/w=-a_1$. So our extra point is not different than $[b_1,-a_1,-a_2/w]$, though we assumed that it is. This leads to the only possible for this extra dual point to be $[c_2,0,-a_2]$.

By the second bullet, $codim(I_{xz})=1$, and hence $a_2x+c_2z$ divides $z(a_1x+b_1y)-x(b_3y+c_3z)$. Since $c_2\neq 0$, one gets that $a_1=b_3$ and $b_1=a_2w',c_3=-c_2w'$, for some $w'\in\mathbb K$.

Putting everything together we have the conditions $$a_1=a_2=b_3\neq 0 \mbox{ and }b_1=a_2w',c_3=-c_2w', c_3=-a_1w, c_2=b_1w.$$

\medskip

{\em Second possibility.} Suppose in addition to the {\em First possibility} above, there is an extra dual point with the last two coordinates different than zero, and different than $[b_1,-a_1,-a_2/w]$. This extra point must have coordinates $[t'',c_3,-b_3]=[t''/w,-a_1,-a_2/w]$ (from the conditions expressed above). Similarly as before, one must have $t''=0$, and therefore this extra point is $[0,-a_1w,-a_1]$.

\medskip

\begin{enumerate}
  \item[i.] If {\em First and Second possibilities} occur, then the defining polynomial of $\mathcal A$ is of the form $xyz(c_2x-wa_1y-a_1z)(c_2x-a_1z)(-a_1wy-a_1z)$. After an appropriate change of coordinates one gets $$F=xyz(x+y+z)(x+z)(y+z),$$ which is the braid arrangement $A_3$, or type (3) in our statement.
  \item[ii.] If just the {\em First possibility} occurs, then the defining polynomial of $\mathcal A$ is $$xyz(c_2x-wa_1y-a_1z)(c_2x-a_1z),w\neq 0,$$ an arrangement of type (1) of 5 lines.
  \item[iii.] If none of the {\em possibilities} occur, then one obtains the defining polynomial of $\mathcal A$ is $$xyz(c_2x-wa_1y-a_1z),w\neq 0,$$ an arrangement of type (2) of 4 lines.
\end{enumerate}

\medskip

\noindent \underline{CASE 1.2}: Suppose $a_1=b_1=0$. Then,
\begin{eqnarray}
I_{xy}&=&\langle y(a_2x+c_2z)-x(b_3y+c_3z)\rangle\nonumber\\
I_{xz}&=&\langle a_2x+c_2z, x(b_3y+c_3z)\rangle\nonumber\\
I_{yz}&=&\langle b_3y+c_3z, y(a_2x+c_2z)\rangle.\nonumber
\end{eqnarray}

\medskip

CASE 1.2.1: Suppose one of the $n_1$ points also has the third coordinate different than zero. So this point is of the form $[\alpha,\beta,\gamma]$, with $\alpha,\beta,\gamma\neq 0$. Lemma \ref{lemma_quadratic} implies that these coordinates must satisfy also the equations
\begin{eqnarray}
a_2\alpha+c_2\gamma&=&0\nonumber\\
b_3\beta+c_3\gamma&=&0.\nonumber
\end{eqnarray}

\medskip

{\em Situation i.} If $a_2\neq 0$ and $b_3\neq 0$, then this point must be $[c_2/a_2,c_3/b_3,-1]$. Also $c_2,c_3\neq 0$.

If there is another dual point with the first and last coordinate not equal to zero, and different than this point, then, from the two bullets at the beginning of the proof, $codim(I_{xz})=1$ leading to $a_2x+c_2z$ dividing $x(b_3y+c_3z)$. But under the conditions $a_2,b_3,c_2,c_3\neq 0$, this is impossible.

For this situation we obtain $\mathcal A$ with defining polynomial $$F=xyz(c_2x/a_2+c_3y/b_3-z)\prod_j(\alpha_jx+\beta_jy),\alpha_j,\beta_j\neq 0,$$ which, after a change of coordinates is a type (2) arrangement in our statement.

\medskip

{\em Situation ii.} If $a_2\neq 0$ and $b_3=0$, then $c_3=0$ and $c_2\neq 0$. Then our ideals are $$I_{xy}=I_{yz}=\langle y(a_2x+c_2z)\rangle, I_{xz}=\langle a_2x+c_2z\rangle.$$

Looking at $\ell_i=\alpha_ix+\beta_iy+\gamma_iz,i\geq 4$, if $\beta_u=0$, for some $u$, then the corresponding $\alpha_u,\gamma_u\neq 0$, as we would obtain $\ell_1$ or $\ell_3$. So this point is of the form $[\alpha_u,0,\gamma_u]\in V(I_{xz})$; therefore $a_2\alpha_u+c_2\gamma_u=0$, and therefore obtaining $\ell_u=c_2x-a_2z$.

If $\beta_u\neq 0$, since $[\alpha_u,\beta_u,\gamma_u]\in V(I_{xy})\cup V(I_{xz})\cup V(I_{yz})$, we obtain $a_2\alpha_u+c_2\gamma_u=0$, as well.

In this situation one obtains $$F=xyz(c_2x-a_2z)\prod_j(c_2x+\beta_jy-a_2z),\beta_j\neq 0,$$ which after a change of coordinates is an arrangement of type (1) in the statement.

\medskip

{\em Situation iii.} If $a_2=0$ and $b_3\neq 0$, then $c_2=0$ and $c_3\neq 0$. This is a similar situation as {\em Situation ii.}

\medskip

{\em Situation iv.} If $a_2=b_3=0$, then $c_2=c_3=0$, and with $a_1=b_1=0$ (the setup of CASE 1.2), one obtains $L_1=L_2=L_3$, leading to $xF_x+yF_y+zF_z=0$, contradiction.

\medskip

CASE 1.2.2: Suppose all the $n_1$ points have the last coordinate equal to zero. Then, since they are points on $V(I_{xy})$, one must have $a_2=b_3$.

Also, if the linear forms different than these $n_1$ are $\ell_1=x,\ell_2=y$ and $\ell_3=z$, then $\mathcal A$ is a pencil of lines and a line at infinity, which is the case of Section 2 about arrangements with linear syzygies. Since we exclude this particular case, we can assume that there must exist a point with the first and last coordinate not zero. This extra point is in $V(I_{xz})$, so it must satisfy the equations $$a_2x+c_2z=0\mbox{ and }a_2xy+c_3xz=0.$$ So there can exist only one such extra point: $[c_2,c_3,-a_2]$. In this case, the defining polynomial looks like: $$F=xyz(x+c_3y+z)\prod_j(\alpha_jx+\beta_jy),\alpha_j,\beta_j\neq 0,$$ which after a change of coordinates is of type (1) if $c_3=0$, and it is of type (2) if $c_3\neq 0$.

\vskip .2in

\noindent {\bf CASE 2:} Suppose that we have exactly one dual point with the first two coordinates nonzero, exactly one point with the first and last coordinates nonzero, and exactly one point with the last two coordinates nonzero. Then it is not difficult to see that we obtain a type (3) arrangement.
\end{proof}

\begin{rem}\label{remark1} In the setup of Theorem \ref{main2}, using \cite[Proposition 3.6]{To} one obtains that the singular locus of $\mathcal A$ lies on a cubic curve. First observe that in all the three types presented this is indeed the case, the cubic being a union of three lines.

Second, let us consider an arrangement of 5 generic lines. The singular locus consists of 10 points which are in sufficiently general position such that there is no cubic passing through all of them; every time one requires for a cubic to pass to such a point the dimension of the space of cubics drops by one, starting with the dimension of plane cubics being equal to 10. So $\mathcal A$ cannot have as a subarrangement an arrangement of 5 generic lines.

If this second observation would give a less computational and more inspiring proof for Theorem \ref{main2}, we would be happy to see it.
\end{rem}

\vskip .2in

We end with a remark regarding a possible generalization of Theorem \ref{main2} to hyperplane arrangements in arbitrary number of variables and having a quadratic minimal syzygy (i.e., quadratic minimal logarithmic derivation).

\begin{rem}\label{remark2} The argumentation presented in Subsection 2.1 is based on the primary decomposition of a certain edge ideal. The similar ideal of interest in the case of quadratic logarithmic derivation is $I_{\mathcal A}:=\displaystyle\bigcap_{1\leq u<v\leq k}I_{u,v}$, where each ideal $I_{u,v}$ is defined as in Lemma \ref{lemma_quadratic}. It would be really interesting to be able to follow the same approach and use the primary decomposition of $I_{\mathcal A}$ in order to prove Theorem \ref{main2}.

Does the ideal $I_{\mathcal A}$ have a meaning beyond the Corollary \ref{char}?
\end{rem}

\vskip .1in

\noindent{\bf Acknowledgement} We are grateful to Torsten Hoge for pointing out to us the result from \cite{OrTe} concerning arrangements with linear syzygies, and for additional discussions and corrections. We are also thankful to the two anonymous referees for corrections and suggestions.

\renewcommand{\baselinestretch}{1.0}
\small\normalsize 

\bibliographystyle{amsalpha}

\end{document}